\theoremstyle{definition}
\theoremstyle{remark}
\numberwithin{equation}{section}
\newtheorem{tm}{Theorem}[section]
\newtheorem{rk}{Remark}[section]
\newtheorem{ap}{Assumption}[section]
\newtheorem{lm}{Lemma}[section]
\newcommand{\cc}{\mathbb C}
\newcommand{\ee}{\mathbb E}
\newcommand{\pp}{\mathbb P}
\newcommand{\nn}{\mathbb N}
\newcommand{\rr}{\mathbb R}
\newcommand{\hh}{\mathbb H}
\newcommand{\bs}{\mathbf s}
\newcommand{\CC}{\mathcal C}
\newcommand{\LL}{\mathcal L}
\newcommand{\OOO}{\mathscr O}
\newcommand{\FFF}{\mathscr F}
\newcommand{\MMM}{\mathscr M}
\newcommand{\HHH}{\mathscr H}
\allowdisplaybreaks \allowdisplaybreaks[4]
\begin{document}

\title[Optimal Regularity of SEEs in UMD Banach Spaces]
{Optimal Regularity of Stochastic Evolution Equations in M-type 2 Banach Spaces}

\author{Jialin Hong}
\address{LSEC, ICMSEC, Academy of Mathematics and Systems Science, Chinese Academy of Sciences, Beijing 100190, China; School of Mathematical Sciences, University of Chinese Academy of Sciences, Beijing 100049, China}
\curraddr{}
\email{hjl@lsec.cc.ac.cn}
\thanks{}
\author{Chuying Huang}
\address{LSEC, ICMSEC, Academy of Mathematics and Systems Science, Chinese Academy of Sciences, Beijing 100190, China; School of Mathematical Sciences, University of Chinese Academy of Sciences, Beijing 100049, China}
\curraddr{}
\email{huangchuying@lsec.cc.ac.cn}

\author{Zhihui Liu}
\address{Department of Applied Mathematics, 
The Hong Kong Polytechnic University, Hongkong}
\curraddr{}
\email{liuzhihui@lsec.cc.ac.cn (Corresponding author)}
\thanks{}

\subjclass[2010]{Primary 60H35; 60H15}

\keywords{stochastic evolution equation, 
multiplicative noise,
well-posedness,
trajectory regularity,
factorization method}

\date{\today}

\dedicatory{}

\begin{abstract}
In this paper, we prove the well-posedness and optimal trajectory regularity for the solution of stochastic evolution equations driven by general multiplicative noises in martingale type 2 Banach spaces.
The main idea of our method is to combine the approach in \cite{HL17} dealing with Hilbert setting and a version of Burkholder inequality in M-type 2 Banach space.
Applying our main results to the stochastic heat equation gives a positive answer to an open problem proposed in \cite{JR12(JDE)}.
\end{abstract}

\maketitle


\section{Introduction}
\label{sec1}

In this paper, we investigate the well-posedness and optimal trajectory regularity for the solution of the stochastic evolution equation
\begin{align}\label{spde} \tag{SEE}
\begin{split}
&{\rm d}X(t)=(AX(t)+F(X(t))) {\rm d}t+G(X(t)) {\rm d}W_U(t),
\ t\in (0,T];\\
& X(0)=X_0 
\end{split}
\end{align}
in a martingale type (M-type) 2 Banach space $(E,\|\cdot\|)$, under weak assumptions on its data.
Here $T$ is a fixed positive number, $A$ is a generator of an analytic $C_0$-semigroup $S(\cdot)$ on $E$ and $W_U:=\{W_U(t):\ t\in [0,T]\}$ is a cylindrical Wiener process in a separable Hilbert space $U$ with respect to a stochastic basis $(\Omega,\FFF,(\FFF_t)_{t\in [0,T]},\pp)$, i.e., for any $g,h\in U$, $W_Uh=\{(W_U(t)h:\ t\in [0,T] \}$ is a Brownian motion, and 
$\ee[W_U(s)g\cdot W_U(t)h]=(s\wedge t)(g,h)_U$ for all $s,t\in [0,T]$.

The well-posedness and regularity for the solution of Eq. \eqref{spde} in Hilbert setting have been studied extensively; see, e.g., 
G. Da prato, S. Kwapie\u{n} and J. Zabczyk \cite{DKZ87(STO)}, 
A. Jentzen and M. R\"ockner \cite{JR12(JDE)},
J. Hong and Z. Liu \cite{HL17} and references therein.
There are several reasons to consider Eq. \eqref{spde} in Banach setting instead of Hilbert setting. 
On one hand, it is shown that a large class of equations with various applications are more suitable to be described in the M-type 2 Banach space $E=\mathbb L^q(\OOO)$ with $q\ge 2$, where throughout $\OOO$ is a bounded, open subset of $\rr^{\rm d}$ with regular boundary (see, e.g., \cite{Brz95(PA), Brz97(SSR)}  for Wiener case and \cite{DGT12(RTRF)} for rough case). 
On the other hand, some error estimations in numerical analysis and simulation of Eq. \eqref{spde} can also be improved (see, e.g., \cite{CHJNW17(IMA), HJK16}).

Another motivation for us to study the optimal trajectory regularity for the solution of Eq. \eqref{spde} in M-type 2 Banach space is an open problem proposed in \cite[Section 4]{JR12(JDE)}.
There the authors gave certain regularity for the solution of Eq. \eqref{spde} in Hilbert setting and applied their result to the stochastic heat equation in $\hh=\mathbb L^2((0,1)^{\rm d})$ driven by multiplicative ${\bf Q}$-Wiener process with certain Lipschitz-type nonlinear drift and diffusion coefficients (see Eq. \eqref{she}). 
Their main result showed that the solution of this equation enjoys $\hh^\sigma$-regularity for any $\sigma\in [0,\min\{3/2,\epsilon+1\})$, where $\hh^\sigma$ denotes the domain of $(-\Delta)^\frac\sigma2$ and $\epsilon\in (0,1]$ is the same H\"older constant for the eigenfunctions of ${\bf Q}$. 
Whether this type of spatial regularity holds, for the solution of Eq. \eqref{she} in the M-type 2 Banach space $E=\mathbb L^q((0,1)^{\rm d})$ for $q>2$, was proposed by the authors of \cite{JR12(JDE)} as an open problem.
If this is valid for sufficiently large $q$, then by using appropriate
Sobolev embeddings one can even show that the solution possesses values in the spatio-temporal H\"older space 
$\CC^\delta([0,T]; \CC^\kappa((0,1)^{\rm d}))$ for certain 
$\delta,\kappa\ge 0$.

In \cite{HL17}, the first and third authors of the present paper established the optimal trajectory regularity for the solution of Eq. \eqref{spde} in Hilbert setting under less assumptions on its data.
The main aim in this paper is to generalize the main results, Theorems 2.1--2.4, in \cite{HL17} under Hilbert setting to Banach setting, by combining the approach in \cite{HL17} and a version of Burkholder inequality in M-type 2 Banach space given by Brze\'zniak \cite{Brz97(SSR)} (see \eqref{bdg}).
Applying our main results, Theorems \ref{main1} and \ref{main2}, to the stochastic heat equation \eqref{she} gives a positive answer to the aforementioned open problem in \cite{JR12(JDE)}.

We also note that a theory of stochastic integration and stochastic evolution equation in UMD Banach spaces  (i.e., spaces in which martingale differences are unconditional) had been developed by J. van Neerven, M. Veraar and L. Weis \cite{NVW07(AOP), NVW08(JFA)}.
Various classical spaces, such as $\mathbb L^q(\OOO)$ for $q\in (1,2)$, do have the UMD property but fail to have M-type 2.
On the other hand, an M-type 2 Banach space needs not to be of  UMD; see an example given by J. Bourgain \cite{Bou83(AM)}.
Meanwhile, in the application to the derivation of spatial H\"older regularity for the solution of the stochastic heat equation, it is sufficient to consider the M-type 2 Banach space $\mathbb L^q(\OOO)$ for sufficiently large $q$.

The rest of the paper is organized as follows.
In the next section, we give preliminaries of the stochastic integration in M-type 2 Banach setting and derive a version of Burkholder inequality.
The well-posedness and optimal trajectory regularity for the solution of Eq. \eqref{spde} are then established in Sections \ref{sec3} and \ref{sec4}, respectively.
Finally, we illustrate our regularity result by the stochastic heat equation and settle the aforementioned problem in the last section.

\section{Burkholder Inequality in M-type 2 Banach Spaces}
\label{sec2}

In this section, we give some preliminaries of the stochastic integration in M-type 2 Banach space and a version of Burkholder type inequality. 
For more details about definitions and properties of M-type 2 Banach space, we refer to \cite{Brz95(PA), Brz97(SSR)}.

It is known that the stochastic calculation in Banach space depends heavily on the geometric structure of the underlying space.
We first recall the definitions of type and M-type for a Banach space.
Let $\{\epsilon_n\}_{n\in \nn_+}$ be a Rademacher sequence in a probability space $(\Omega',\FFF',\pp')$, i.e., a sequence of independent random variables taking the values $\pm 1$ with probability $1/2$.
Then $E$ is called of type $2$ if there exists a constant $\tau\ge 1$ such that 
\begin{align*}
\bigg\|\sum_{n=1}^N \epsilon_n x_n\bigg\|_{\mathbb L^2(\Omega';E)}
\le \tau \bigg(\sum_{n=1}^N \|x_n\|^2 \bigg)^\frac12
\end{align*}
for all finite sequences $\{x_n\}_{n=0}^N$ in $E$;
$E$ is called of M-type $2$ if there exists a constant $\tau^M \ge 1$ such that  
\begin{align}\label{m-type}
\|f_N\|_{\mathbb L^2(\Omega;E)}
\le \tau^M \bigg(\|f_0\|_{\mathbb L^2(\Omega';E)}^2
+\sum_{n=1}^N \|f_n-f_{n-1}\|_{\mathbb L^2(\Omega';E)}^2 \bigg)^\frac12
\end{align}
for all $E$-valued $\mathbb L^p$-martingales $\{f_n\}_{n=0}^N$.
It is well-known that martingale type $2$ implies type $2$ and vice verse for UMD spaces.
It is clear that when $E$ is an M-type 2 Banach space then so is $E^\theta$ for $\theta>0$.
In the rest of the paper, we assume that $E$ is an M-type 2 Banach space if without further emphasis.

For stochastic integration in Banach space, $\gamma$-radonifying operators play an important role instead of Hilbert--Schmidt operators in Hilbert setting. 
Let $\{\gamma_n\}_{n\geq 1}$ is a sequence of independent 
$\mathcal N(0,1)$-random variables on a probability space $(\Omega',\FFF',\pp')$.
An operator $R\in \LL(U,E)$ is called $\gamma$-radonifying if there exists an orthonormal basis $\{h_n\}_{n\in \nn_+}$ of $U$ such that the Gaussian series $\sum_{n\in \nn_+}\gamma_nRh_n$ converges in $\mathbb L^2(\Omega';E)$.
Then the number 
\begin{align*}
\|R\|_{\gamma(U,E)}:=\Big\|\sum_{n\in \nn_+}\gamma_nRh_n\Big\|_{\mathbb L^2(\Omega';E)}
\end{align*}
does not depend on the sequence $\{\gamma_n\}_{n\geq 1}$ and the basis $\{h_n\}_{n\in \nn_+}$, and
it defines a norm on the space $\gamma(U,E)$ of all $\gamma$-radonifying operators from $U$ into $E$.
If $E$ is a Hilbert space, then $\gamma(U,E)=\LL_2(U,E)$ isometrically, where $\LL_2(U,E)$ denotes the space of
all Hilbert--Schmidt operators from $U$ to $E$.
Moreover, the $\gamma$-radonifying operator satisfies the ideal property, i.e., for any $S_1\in \LL(U',U)$ and $S_2\in \LL(E,E')$ with Hilbert space $U'$ and Banach space $E'$ it holds that 
\begin{align}\label{rad-ide}
||S_2RS_1||_{\gamma(U',E')}\le \|S_2\|_{\LL(E,E')}||R||_{\gamma(U,E)}\|S_1\|_{\LL(U',U)}.
\end{align}

An $\LL(U,E)$-valued adapted process $\Phi$ on $(\Omega, \FFF, \FFF_t,\pp)$ is said to be elementary, if there exists a partition $\{t_n\}_{n=0}^N$ of $[0,T]$, a sequence of disjoint sets $\{A_{mn}\}_{m=1}^M\subset \FFF_{t_n}$ for each $0\le n\le N$, orthonormal elements $\{h_k\}_{k=1}^K \subset U$ and $\{x_{kmn}\}_{k=1,m=1,n=0}^{K,M,N} \subset E$ such that
\begin{align*}
\Phi(t,\omega)
=\sum_{n=0}^{N-1} \sum_{m=1}^M 
\chi_{(t_n, t_{n+1}]\times A_{mn}}(t,\omega)
\sum_{k=1}^K h_k\otimes x_{kmn}, 
\ (t,\omega)\in [0,T]\times \Omega.
\end{align*}
Then one can define the stochastic integral of an elementary process $\Phi$ with respect to the $U$-cylindrical Wiener process $W_U$ by
\begin{align*}
\int^t_0\Phi(r){\rm d} W_U(r)
:=\sum_{n=0}^{N-1} \sum_{m=1}^M \sum_{k=1}^K 
\chi_{A_{mn}} [W_U(t_{n+1}\wedge t)-W_U(t_n\wedge t)] h_k x_{kmn}
\end{align*}
for $t\in [0,T]$.
It was shown in \cite[Theorem 2.4]{Brz97(SSR)} that 
\begin{align*}
\bigg\|\int_0^t \Phi(r){\rm d} W_U(r)\bigg\|_{\mathbb L^2(\Omega;E)}
\le \tau^M \|\Phi\|_{\mathbb L^2(\Omega; \mathbb L^2(0,t; \gamma(U,E)))},
\quad t\in [0,T],
\end{align*}
where $\tau^M$ is the M-type 2 constant of $E$ in \eqref{m-type}.

It is a routine density argument to extend the stochastic integral
to arbitrary $\LL(U,E)$-valued predictable processes $\Phi$ such that $\|\Phi\|_{\mathbb L^2(\Omega; \mathbb L^2(0,T; \gamma(U,E)))}<\infty$; the process 
$\int_0^\cdot \Phi(r){\rm d} W_U(r)$ is a continuous martingale.
Then the Doob's martingale inequality leads to the one-sided Burkholder inequality:
\begin{align*}
\ee \bigg[\sup_{t\in [0,T]} \bigg\|\int_{0}^t \Phi(r){\rm d}W_U(r)\bigg\|^p \bigg] 
\le C \|\Phi\|^p_{\mathbb L^p(\Omega;\mathbb L^2(0,T;\gamma(U,E)))}
\end{align*}
for any $p\ge 2$.
Applying Minkovskii inequality, we obtain the following version of one-sided Burkholder inequality:
\begin{align}\label{bdg}
\ee \bigg[\sup_{t\in [0,T]} \bigg\|\int_{0}^t \Phi(r){\rm d}W_U(r)\bigg\|^p \bigg] 
\le C \|\Phi\|^p_{\mathbb L^2(0,T; \mathbb L^p(\Omega;\gamma(U,E)))}.
\end{align}

\section{Well-posedness}
\label{sec3}

Assume that the linear operator $A: D(A)\subseteq E\rightarrow E$ is the infinitesimal generator of an analytic $C_0$-semigroup $S(\cdot)$ and the resolvent set of $A$ contains all $\lambda\in \cc$ with $\Re [\lambda]\ge 0$.
Then one can define the fractional powers $(-A)^\theta$ for $\theta\in \rr$ of the operator $-A$.
Let $\theta\ge 0$ and $E^\theta$ be the domain of $(-A)^\frac\theta2$ equipped with the norm $\|\cdot\|_\theta$:
\begin{align*}
\|x\|_\theta:=\|(-A)^\frac\theta2 x\|,\quad x\in E^\theta.
\end{align*}
Note that $E^{\theta}$ is also an M-type $2$ Banach space.
The analyticity of $S(\cdot)$ ensures the following properties (see, e.g., \cite[Theorem 6.13 in Chapter 2]{Paz83}):
\begin{align}\label{ana}
&\|(-A)^\nu \|_{\LL(E)}\le C, \nonumber\\
&\|(-A)^\mu S(t)\|_{\LL(E)}\le C t^{-\mu},  \\
&\|(-A)^{-\rho} (S(t)-{\rm Id}_{E}) \|_{\LL(E)}\le Ct^\rho, \nonumber
\end{align}
for any $0<t\le T$, $\nu \le 0\le \mu$ and $0\le \rho\le 1$.

Recall that a predictable stochastic process $X:[0,T]\times \Omega\rightarrow \hh$  is called a mild solution of Eq. \eqref{spde} if $X\in \mathbb L^\infty(0,T;E)$ a.s
and for all $t\in [0,T]$ it holds a.s. that 
\begin{align}\label{mild}
X(t)=S(t)X_0+S*F(X)(t)+S\diamond G(X)(t),
\end{align}
where $S*F(X)$ and $S\diamond G(X)$ denote the deterministic and stochastic convolutions, respectively:
\begin{align*}
&S*F(X)(\cdot):=\int_0^\cdot S(\cdot-r) F(X(r)){\rm d}r, \\
&S\diamond G(X)(\cdot):=\int_0^\cdot S(\cdot-r) G(X(r)){\rm d}W(r).
\end{align*}
The uniqueness of the mild solution of Eq. \eqref{spde} is understood in the sense of stochastically equivalence.

To perform the well-posedness result, we give the following Lipschitz-type continuity with linear growth condition on the nonlinear operators $F$ and $G$.

\begin{ap} \label{a1}
There exist four nonnegative, Borel measurable functions $K_F,K_{F,\theta}$ and $K_G,K_{G,\theta}$ on $[0,T]$ with
\begin{align*}
K_F^0:=\int_0^T K_F(t) {\rm d}t<\infty, \quad 
&K_G^0:=\int_0^T K_G^2(t) {\rm d}t<\infty, \\
K_F^\theta:=\int_0^T K_{F,\theta} (t) {\rm d}t<\infty, \quad 
&K_G^\theta:=\int_0^T K_{G,\theta}^2(t) {\rm d}t<\infty,
\end{align*}
such that for any $x,y\in E$ and $z\in E^\theta$ it holds that
\begin{align*}
\|S&(t) (F(x)-F(y))\| \le K_F(t)\|x-y\|, \\
&\|S(t) F(z)\|_\theta \le K_{F,\theta} (t)(1+\|z\|_\theta), 
\end{align*}
and that 
\begin{align*}
\|S&(t) (G(x)-G(y))\|_{\gamma(U,E)} \le K_G(t)\|x-y\|, \\
&\|S(t) G(z)\|_{\gamma(U,E^\theta)} \le K_{G,\theta}(t)(1+\|z\|_\theta). 
\end{align*}
\end{ap}

\begin{tm} \label{main1}
Let $p\ge 2$, $\beta\geq\theta\geq 0$, $X_0:\Omega\rightarrow E^\beta$ be strongly $\FFF_0$-measurable such that $X_0\in \mathbb L^p(\Omega;E^\beta)$ and Assumptions \ref{a1} holds.
Then Eq. \eqref{spde} exists a unique mild solution $X$ such that the following statements hold.
	\begin{enumerate}
		\item
		There exists a constant $C=C(T,p,K^\theta_{F},K^\theta_{G})$ such that
		\begin{align}\label{mom}
		\sup_{t\in [0,T]}\ee\Big[\|X(t)\|_{\theta}^p\Big]
		&\le C\Big(1+\ee\Big[\|X_0\|_{\theta}^p \Big]\Big).
		\end{align}
		
		\item
		The solution $X$ is continuous with respect to $\|\cdot\|_{\mathbb L^p(\Omega;E^{\theta)}}$:
		\begin{align}\label{mean}
		\lim_{t_1\rightarrow t_2}\ee\Big[\|X(t_1)-X(t_2)\|_{\theta}^p\Big]=0,
		\quad t_1,t_2\in [0,T].
		\end{align}
	\end{enumerate}
\end{tm}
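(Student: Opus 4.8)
The plan is the classical contraction-plus-bootstrap scheme. I would realise the mild solution as the fixed point of the map $\mathcal T$ sending a process $Y$ to the right-hand side of \eqref{mild}, run Banach's fixed point theorem on the base scale $E$ (where $F,G$ are Lipschitz), transport the bounds to the scale $E^\theta$ (where only linear growth is available), and finally read off the $\mathbb L^p(\Omega;E^\theta)$-continuity from an increment identity. Concretely, for $p\ge2$ let $\mathcal H^p$ be the Banach space of $E$-valued predictable processes $Y$ on $[0,T]\times\Omega$ with $\|Y\|_{\mathcal H^p}:=\sup_{t\in[0,T]}(\ee\|Y(t)\|^p)^{1/p}<\infty$. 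Using the Lipschitz bound of Assumption~\ref{a1} together with its growth bound at $z=0$, the ideal property \eqref{rad-ide}, and $\|(-A)^{-\theta/2}\|_{\LL(E)}\le C$ from \eqref{ana}, I would first establish, for $r<t$, $\|S(t-r)F(Y(r))\|_{\mathbb L^p(\Omega;E)}\le K_F(t-r)\|Y(r)\|_{\mathbb L^p(\Omega;E)}+CK_{F,\theta}(t-r)$ and the analogue for $G$ in $\gamma(U,E)$; then Minkowski's integral inequality for the deterministic convolution and \eqref{bdg} applied to $r\mapsto\chi_{[0,t]}(r)S(t-r)G(Y(r))$ for each fixed $t$ give $\mathcal T(\mathcal H^p)\subseteq\mathcal H^p$ together with
\[
\|(\mathcal T Y_1-\mathcal T Y_2)(t)\|_{\mathbb L^p(\Omega;E)}\le\int_0^tK_F(t-r)\|(Y_1-Y_2)(r)\|_{\mathbb L^p(\Omega;E)}\,{\rm d}r+C\Big(\int_0^tK_G^2(t-r)\|(Y_1-Y_2)(r)\|_{\mathbb L^p(\Omega;E)}^2\,{\rm d}r\Big)^{\!1/2}.
\]
Since $K_F\in\mathbb L^1(0,T)$ and $K_G^2\in\mathbb L^1(0,T)$, on the equivalent weighted norm $\sup_te^{-\lambda t}(\ee\|Y(t)\|^p)^{1/p}$ this makes $\mathcal T$ a strict contraction once $\lambda$ is large (by dominated convergence $\int_0^TK_F(s)e^{-\lambda s}\,{\rm d}s$ and $\int_0^TK_G^2(s)e^{-2\lambda s}\,{\rm d}s$ tend to $0$), so I obtain a unique $X\in\mathcal H^p$ solving \eqref{mild}; uniqueness in the stated class follows from the same estimates after a localisation.

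For the moment bound \eqref{mom} I would iterate from $X^{(0)}:=S(\cdot)X_0$, $X^{(n+1)}:=\mathcal TX^{(n)}$, which converges to $X$ in $\mathcal H^p$. Since $X_0\in E^\beta\subseteq E^\theta$ and $S(\cdot)$ also defines a $C_0$-semigroup on $E^\theta$, uniformly bounded on $[0,T]$ and commuting with $(-A)^{\theta/2}$, one has $X^{(0)}\in\mathcal H^p_\theta:=\{Y:\sup_t(\ee\|Y(t)\|_\theta^p)^{1/p}<\infty\}$; feeding the two growth bounds of Assumption~\ref{a1} in the $\|\cdot\|_\theta$-norm into Minkowski's inequality and \eqref{bdg} (valid in the M-type~$2$ space $E^\theta$) gives inductively $X^{(n)}\in\mathcal H^p_\theta$ and, writing $\psi_n(t):=(\ee\|X^{(n)}(t)\|_\theta^p)^{1/p}$,
\[
\psi_{n+1}(t)\le C\big(\ee\|X_0\|_\theta^p\big)^{1/p}+\int_0^tK_{F,\theta}(t-r)(1+\psi_n(r))\,{\rm d}r+C\Big(\int_0^tK_{G,\theta}^2(t-r)(1+\psi_n(r))^2\,{\rm d}r\Big)^{\!1/2}.
\]
The same weighted-norm device then yields $\sup_n\|X^{(n)}\|_{\mathcal H^p_\theta}\le C(T,p,K_F^\theta,K_G^\theta)\big(1+(\ee\|X_0\|_\theta^p)^{1/p}\big)$. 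Finally, since $X^{(n)}(t)\to X(t)$ in $\mathbb L^p(\Omega;E)$ and $\mathbb L^p(\Omega;E^\theta)$ is reflexive ($E$ is reflexive, being of M-type~$2$, and $(-A)^{\theta/2}$ identifies $E^\theta$ isometrically with $E$), the $\mathbb L^p(\Omega;E^\theta)$-bounded sequence $\{X^{(n)}(t)\}$ has a weak limit, necessarily $X(t)$; hence $X(t)\in\mathbb L^p(\Omega;E^\theta)$ with $\ee\|X(t)\|_\theta^p\le\liminf_n\psi_n(t)^p$, which is \eqref{mom}.

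For the mean continuity \eqref{mean}, I would apply $S(t-s)$ to \eqref{mild} at time $s<t$ and use the semigroup property to get
\[
X(t)-X(s)=(S(t-s)-{\rm Id}_E)X(s)+\int_s^tS(t-r)F(X(r))\,{\rm d}r+\int_s^tS(t-r)G(X(r))\,{\rm d}W(r).
\]
The first term tends to $0$ in $\mathbb L^p(\Omega;E^\theta)$ as $t\downarrow s$ by dominated convergence: pointwise in $\omega$ it goes to $0$ in $E^\theta$ by strong continuity of $S(\cdot)$ on $E^\theta$ (using $X(s)\in E^\theta$ a.s.\ from the previous step), with integrable majorant $C\|X(s)\|_\theta$. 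The last two terms are bounded in $\mathbb L^p(\Omega;E^\theta)$, by the estimates of the previous step on $[s,t]$, \eqref{bdg} and \eqref{mom}, by $C\big(1+\sup_{r\in[0,T]}(\ee\|X(r)\|_\theta^p)^{1/p}\big)\big(\int_0^{t-s}K_{F,\theta}(u)\,{\rm d}u+(\int_0^{t-s}K_{G,\theta}^2(u)\,{\rm d}u)^{1/2}\big)$, which vanishes as $t\downarrow s$ because $K_{F,\theta}\in\mathbb L^1(0,T)$ and $K_{G,\theta}^2\in\mathbb L^1(0,T)$. The case $s\downarrow t$ is symmetric and the general limit $t_1\to t_2$ follows by passing to one-sided subsequences, proving \eqref{mean}.

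The step I expect to be the main obstacle is the transport to the $\theta$-scale: Assumption~\ref{a1} makes $F$ and $G$ contractive only in the base norm $\|\cdot\|$, whereas the target regularity is the stronger norm $\|\cdot\|_\theta$, in which only a linear-growth bound is available. This forces the contraction to be set up in $\mathcal H^p$ and the $E^\theta$-bound to be extracted a posteriori along the Picard iterates, the passage to the limit relying on reflexivity of $E^\theta$ (hence on the fact that M-type~$2$ spaces are reflexive). A secondary, routine, point is that \eqref{bdg} is stated for a fixed integrand, so for the stochastic convolution it must be invoked on each interval $[0,t]$ (for \eqref{mom}) or $[s,t]$ (for \eqref{mean}) separately; the pathwise-supremum estimates needed for trajectory regularity are obtained only later, via the factorisation method.
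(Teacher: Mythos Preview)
Your plan is correct and follows the same overall scheme as the paper: run a contraction in the base space $\HHH^p_0$ using the weighted norm $\sup_t e^{-\lambda t}\|\cdot\|_{\mathbb L^p(\Omega;E)}$, then bootstrap to the $\theta$-scale, then read off the $\mathbb L^p(\Omega;E^\theta)$-continuity from an increment decomposition.

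The tactical differences are worth recording. For the passage to $E^\theta$, the paper first shows that the map $\MMM$ sends $\HHH^p_\theta$ to itself (using the growth bounds), then asserts that the contraction on $(\HHH^p_\theta,\|\cdot\|_{\HHH^{p,u^*}})$ yields a fixed point in $\HHH^p_\theta$; since that metric is not complete on $\HHH^p_\theta$, this step is somewhat implicit. Your route---uniform $\HHH^p_\theta$-bounds along the Picard iterates together with weak compactness, using that M-type~$2$ spaces are (isomorphic to $2$-uniformly smooth and hence) reflexive---makes this passage explicit and self-contained. For the quantitative constant in \eqref{mom}, the paper instead feeds the fixed-point identity into a Gronwall lemma from \cite{HL17}; your weighted-norm bound on $\sup_n\|X^{(n)}\|_{\mathcal H^p_\theta}$ yields the same conclusion without invoking an external lemma, at the cost of a constant that is slightly less explicit. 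For the continuity \eqref{mean}, the paper splits each of $S(\cdot)X_0$, $S*F(X)$, $S\diamond G(X)$ separately at $t_1<t_2$ (each giving a ``$(S(t_2-t_1)-\mathrm{Id})$''-piece and a ``tail'' over $[t_1,t_2]$), whereas you collapse the three $(S(t-s)-\mathrm{Id})$-pieces into the single term $(S(t-s)-\mathrm{Id})X(s)$; the two decompositions are equivalent and both rely on \eqref{bdg}, dominated convergence, and integrability of $K_{F,\theta}$, $K_{G,\theta}^2$.
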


\begin{proof}
		
	For $p\ge 2$, we denote by
	$\HHH_{\theta}^p$ the space of all $E^{\theta}$-valued processes $Y$ defined on $[0,T]$ such that 
	\begin{align*}
	\|Y\|_{\HHH_{\theta}^p}
	:=\sup_{t\in [0,T]} \Big(\ee\Big[\|Y(t)\|_{\theta}^p \Big]\Big)^\frac1p<\infty.
	\end{align*}
	Note that $(\HHH_{\theta}^p,\|\cdot\|_{\HHH_{\theta}^p})$ becomes a Banach space after identifying stochastic processes which are stochastically equivalent.
	
For $X_0\in \mathbb L^p(\Omega; E^\beta)\subseteq \mathbb L^p(\Omega; E^{\theta})$ and $X\in \HHH_{\theta}^p$, define an operator $\MMM$ by
\begin{align*}
\MMM(X)(t)=S(t)X_0+S*F(X)(t)+S\diamond G(X)(t),
\quad t\in [0,T].
\end{align*}
We first show that $\MMM$ maps from $\HHH_{\theta}^p$ to 
	$\HHH_{\theta}^p$. 
	
	By Minkovskii inequality, we get
	\begin{align*}
	\big\|\MMM(X)\big\|_{\HHH_{\theta}^p}
	&\le \big\|S(t)X_0\big\|_{\HHH_{\theta}^p}
	+\big\|S*F(X)(t) \big\|_{\HHH_{\theta}^p}
	+\big\|S\diamond G(X)(t)\big\|_{\HHH_{\theta}^p}.
	\end{align*}
Since $S$ is uniformly bounded in $E$, we set 
	\begin{align*}
	M(t):=\sup_{r\in [0,t]} \|S(r)\|_{\LL(E)},\quad t\in [0,T].
	\end{align*}
	Then 
	\begin{align*}
	\big\|S(t)X_0\big\|_{\HHH_{\theta}^p}
	\le M_T  \|X_0\|_{\mathbb L^p(\Omega;E^{\theta})}.
	\end{align*}
	By Minkovskii inequality and Assumption \ref{a2}, we get
	\begin{align*}
	\| S*F(X)(t) \|_{\HHH_{\theta}^p} 
	&\le \sup_{t\in [0,T]} \int_0^t \|S(t-r) F(X(r))\|_{\mathbb L^p(\Omega;E^{\theta})} {\rm d}r \\
	&\le \sup_{t\in [0,T]} \int_0^t K_{F,\theta}(t-r) (1+\|X(r)\|_{\mathbb L^p(\Omega;E^{\theta})}) {\rm d}r \\
	&\le \bigg(\int_0^T K_{F,\theta}(r){\rm d} r\bigg) \bigg(1+\|X\|_{\HHH_{\theta}^p}\bigg).
	\end{align*}
	For the stochastic convolution, applying the Burkholder inequality \eqref{bdg} and Assumption \ref{a2}, we obtain
	\begin{align*}
	\ee\bigg[\|S\diamond G(X)(t)\|^p_\theta\bigg] 
	&\le \bigg(\int_0^t  \|S(t-r) G(X(r)) \|_{\mathbb L^p(\Omega; \gamma(U,E^\theta))}^2  {\rm d} r\bigg)^\frac p2 \\
	&\le \bigg(\int_0^t K_{G,\theta}^2(t-r) 
	\big(1+\|X(r)\|_{\mathbb L^p(\Omega; E^\theta)}  \big)^2
	{\rm d}r\bigg)^\frac p2 \\
	&\le \bigg(\int_0^t K_{G,\theta}^2(r)  {\rm d}r\bigg)^\frac p2
	\bigg(1+\|X\|_{\HHH_{\theta}^p} \bigg)^p.
	\end{align*}
	Then 
	\begin{align*}
	\|S\diamond G(X)(t)\|_{\HHH_{\theta}^p} 
	\le \bigg(\int_0^T K_{G,\theta}^2(r)  {\rm d}r\bigg)^\frac12
	\bigg(1+\|X\|_{\HHH_{\theta}^p} \bigg).
	\end{align*}
	Combining the above estimations, we get
	\begin{align*}
	\big\|\MMM(X)\big\|_{\HHH_{\theta}^p}
	&\le M_T  \|X_0\|_{\mathbb L^p(\Omega;E^\theta)}
	+N_T \big(1+\|X\|_{\HHH_{\theta}^p} \big),
	\end{align*}
	where $N(t)$ is the non-decreasing, continuous function defined by
	\begin{align*}
	N(t)=
	\int_0^t K_{F,\theta}(r){\rm d} r+\bigg(\int_0^t K_{G,\theta}^2(r)  {\rm d}r\bigg)^\frac12,\quad t\in [0,T].
	\end{align*}
	Thus $\big\|\MMM(X)\big\|_{\HHH_{\theta}^p}<\infty$ and $\MMM$ maps from $\HHH_{\theta}^p$ to $\HHH_{\theta}^p$. 
	
Next we show that $\MMM$ is a contraction in $\HHH^p_0$. 
	To this end, we introduce the norm
	\begin{align*}
	\|Y\|_{\HHH^{p,u}}
	:=\sup_{t\in [0,T]} e^{-ut}\bigg(\ee\bigg[\|Y(t)\|^p \bigg]\bigg)^\frac1p<\infty,
	\end{align*}
	which is equivalent to $\|\cdot\|_{\HHH^p_0}$ for any $u>0$.
	Then for $X,Y\in \HHH^p_0$, previous arguments yield that
	\begin{align*}
	& \|\MMM(X)(t)-\MMM(Y)(t)\|_{\mathbb L^p(\Omega;E)} \\
	&\le \int_0^t K_F(t-r) \|X(r)-Y(r)\|_{\mathbb L^p(\Omega;E)} {\rm d}r \\
	&\quad +\bigg(\int_0^t K_G^2(t-r) \|X(r)-Y(r)\|_{\mathbb L^p(\Omega;E)}^2 {\rm d}r \bigg)^\frac12 \\
	&\le \bigg(\int_0^t e^{u r} K_F(t-r){\rm d}r 
	+\bigg(\int_0^t e^{2u r} K_G^2(t-r) {\rm d}r\bigg)^\frac12 \bigg) 
	\|X-Y\|_{\HHH^{p,u}}.
	\end{align*}
As a result,
	\begin{align*}
	\|\MMM(X)(t)-\MMM(Y)(t)\|_{\HHH^{p,u}}
	\le L_T(u) \|X-Y\|_{\HHH^{p,u}},
	\end{align*}
	where 
	\begin{align*}
	L_T(u)=\int_0^T e^{-ur} K_F(r){\rm d}r 
	+\bigg(\int_0^T e^{-2u r} K_G^2(r) {\rm d}r\bigg)^\frac12.
	\end{align*}
It is not difficult to show that there exists a sufficiently large $u^*>0$ such that $L_T(u^*)<1$.
Thus the operator $\MMM$ is a strict contraction in 
	$(\HHH_\theta^p,\|\cdot\|_{\HHH^{p,u^*}})$, which shows the existence and uniqueness of a mild solution of Eq. \eqref{spde} such that
	\begin{align}\label{mom-0}
	\sup_{t\in [0,T]} \ee\bigg[\|X(t)\|_\theta^p\bigg]<\infty.
	\end{align}
	
	Now we prove the moment estimation \eqref{mom}.
	Previous procedure implies the following estimation:
	\begin{align*}
	\|X(t)\|_{\mathbb L^p(\Omega;E^\theta)} 
	&\le M(t)  \|X_0\|_{\mathbb L^p(\Omega;E^\theta)}+N(t)
	+\int_0^t K_{F,\theta}(t-r) \|X(r)\|_{\mathbb L^p(\Omega;E^\theta)} {\rm d}r  \\
	&\quad +\bigg(\int_0^t K_{G,\theta}^2(t-r) \|X(r)\|_{\mathbb L^p(\Omega; E^\theta)}^2 {\rm d}r\bigg)^\frac12.
	\end{align*}
	Then by H\"older inequality, we have
	\begin{align*}
	\|X(t)\|_{\mathbb L^p(\Omega;E^\theta)}^2 
	&\le m(t)+\int_0^t K(t-r) \|X(r)\|_{\mathbb L^p(\Omega;E^\theta)}^2 {\rm d}r,
	\end{align*}
where $m(\cdot):=3 (M(\cdot)  \|X_0\|_{\mathbb L^p(\Omega;E^\theta)}+N(\cdot) )^2$ is non-decreasing and 
$K(\cdot):=3 K_F^\theta K_{F,\theta}(\cdot)+3K_{G,\theta}^2(\cdot)$ is integrable on $[0,T]$.
Applying the Gronwall's inequality in \cite[Lemma 3.1]{HL17} and the uniform boundedness \eqref{mom-0}, we get \eqref{mom}.
	
	It remains to prove the $\mathbb L^p(\Omega)$ continuity.
	Without loss of generality, assume that $0\le t_1<t_2\le T$.
	Due to the strong continuity of the 
	$C_0$-semigroup $S(t)$: 
	\begin{align}
	\label{c0}
	(S(t)-{\rm Id}_{E}) x\rightarrow 0\ \text{in}\ E
	\ \text{as}\ t\rightarrow 0, \quad \forall\ x\in E,
	\end{align} 
	the term $S(\cdot)X_0$ is continuous $\mathbb L^p(\Omega;E^\theta)$:
	\begin{align}\label{con-mean1}
	& \lim_{t_1\rightarrow t_2}
	\ee\bigg[\big\|S(t_1) X_0-S(t_2) X_0 \big\|^p_\theta\bigg] \nonumber \\
	&=\lim_{t_1\rightarrow t_2}\ee\bigg[\big\|(S(t_2-t_1)-{\rm {\rm Id}_{E}}) S(t_1)X_0 \big\|^p_\theta\bigg]
	=0.
	\end{align}
	
	Next we consider the stochastic convolution $S\diamond G(X)$.
	By (\ref{bdg}), we get
	\begin{align*}
	&\ee\bigg[\big\|S\diamond G(X)(t_1)-S\diamond G(X)(t_2) \big\|_\theta^p\bigg] \\
	&\le \bigg( \int_0^{t_1} \|(S(t_2-t_1)-{\rm Id}_E) S(t_1-r) G(X(r))\|_{\mathbb L^p(\Omega;\gamma(U,E^\theta))}^2 {\rm d} r \bigg)^\frac p2  \\
	&\quad +\bigg(\int_{t_1}^{t_2} \|S(t_2-r) G(X(r))\|_{\mathbb L^p(\Omega;\gamma(U,E^\theta))}^2 {\rm d} r \bigg)^\frac p2
	=: I_1+I_2.
	\end{align*}
	For the first term, by the uniformly boundedness \eqref{mom} of $X$, we get 
	\begin{align*}
	I_1\le C \bigg(\int_0^{t_1} K_{G,\theta}^2(r)  {\rm d}r\bigg)^\frac p2
	\bigg(1+\|X\|_{\HHH_{\theta}^p} \bigg)^p
	<\infty.
	\end{align*}
	Then $I_1$ tends to 0 as $t_1\rightarrow t_2$ by the strong continuity \eqref{c0} of the $C_0$-semigroup $S(\cdot)$ and Lebesgue dominated convergence theorem.
	For the second term, we have
	\begin{align*}
	I_2\le \bigg(\int_0^{t_2-t_1} K_{G,\theta}^2 (r) {\rm d}r\bigg)^\frac p2
	\bigg(1+\|X\|_{\HHH^p_\theta} \bigg)^p
	\rightarrow 0\quad \text{as}\quad  t_1\rightarrow t_2
	\end{align*}
	by Lebesgue dominated convergence theorem.
	Therefore,
	\begin{align}\label{con-mean2}
	\lim_{t_1\rightarrow t_2}
	\ee\bigg[\big\|S\diamond G(X)(t_1)-S\diamond G(X)(t_2) \big\|_\theta^p\bigg]=0.
	\end{align}
	Similar arguments can handle the deterministic convolution $S*F(X)$:
	\begin{align}\label{con-mean3}
	\lim_{t_1\rightarrow t_2}
	\ee\bigg[\big\|S*F(X)(t_1)-S*F(X)(t_2) \big\|_\theta^p\bigg]=0.
	\end{align}
	Combining the estimations \eqref{con-mean1}--\eqref{con-mean3}, we derive
	\eqref{mean}.
\end{proof}

\begin{rk}
When $\beta=\theta=0$, the result of Theorem \ref{main1} holds if $S(\cdot)$ is only a $C_0$-semigroup.
\end{rk}

\section{Optimal Trajectory Regularity}
\label{sec4}

In this section, we consider the trajectory regularity for the solution of Eq. \eqref{spde}.
For convenience, we use the notation $\mathbb L^p(\Omega;\CC^\delta([0,T];E^\theta))$ with $\delta\in  [0,1]$ and $\theta\ge 0$ to denote $E^\theta$-valued stochastic processes $\{X(t):\ t\in [0,T]\}$ such that for any $t_1,t_2\in [0,T]$ with $t_1\neq t_2$,
\begin{align*}
&\lim_{t_1\rightarrow t_2} \|X(t_1)-X(t_2)\|_\theta=0
\quad \text{a.s. and}\quad
\ee\Big[\sup_{t\in [0,T]} \|X(t)\|_\theta^p\Big]<\infty
\end{align*}
when $\delta=0$ and 
\begin{align*}
\|X(t_1)-X(t_2)\|_\theta\le \Theta |t_1-t_2|^\delta
\quad \text{a.s. and}\quad
\ee\big[\Theta ^p\big]<\infty
\end{align*}
when $\delta>0$.
Our aim is to find the optimal constants $\delta$ and $\theta$ such that the solution of Eq. \eqref{spde} is in  
$\mathbb L^p(\Omega;\CC^\delta([0,T];E^\theta))$. 

To obtain the trajectory regularity for the solution of Eq. \eqref{spde}, in addition to Assumption \ref{a1} we propose the following assumption.

\begin{ap} \label{a2}
	There exists a constant $\alpha\in (1/p,1/2)$ with $p>2$ such that
	\begin{align*}
	K^{\theta,\alpha}_{F}:=\int_0^T t^{-\alpha} K_{F,\theta} (t) {\rm d}t<\infty,\quad 
	K^{\theta,\alpha}_{G}:=\int_0^T t^{-2\alpha} K_{G,\theta}^2 (t) {\rm d}t<\infty.
	\end{align*}
\end{ap}

Our main idea is to apply the factorization method established in \cite{DKZ87(STO)}, which associates a linear operator $R_\alpha$ with $\alpha\in (0,1)$ defined by 
\begin{align}\label{ra}
R_\alpha f(t):=\int_0^t (t-r)^{\alpha-1} S(t-r) f(r) {\rm d} r,\quad t\in [0,T],
\end{align} 
and the following generalized characterization in \cite{HL17}.

\begin{lm}  \label{prop-hol}
Let $1/p<\alpha<1$ and $\theta,\theta_1,\delta\ge 0$.
	Then $R_\alpha$ defined by \eqref{ra} 
	is a bounded linear operator from $\mathbb L^p(0,T; E^\theta)$ to $\CC^{\delta}([0,T]; E^{\theta_1})$ when $\alpha,\theta,\theta_1,\delta$ satisfy one of the following conditions:
	\begin{enumerate}
		\item
		$\delta=\alpha-\frac1p+\frac{\theta-\theta_1}2$ when $\theta_1>\theta$ and 
		$\alpha>\frac{\theta_1-\theta}2+\frac1p$;
		
		\item
		$\delta<\alpha-\frac1p$ when $\theta_1=\theta$;
		
		\item
		$\delta=\alpha-\frac1p$ when $\theta_1<\theta$.
	\end{enumerate}
\end{lm}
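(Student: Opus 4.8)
The plan is to estimate $\|(-A)^{\theta_1/2} R_\alpha f(t)\|$ and $\|(-A)^{\theta_1/2}(R_\alpha f(t_2)-R_\alpha f(t_1))\|$ directly, using the analyticity estimates \eqref{ana} together with H\"older's inequality in the time variable, and to read off in each case which exponents are admissible. First I would write $(-A)^{\theta_1/2} R_\alpha f(t)=\int_0^t (t-r)^{\alpha-1}(-A)^{(\theta_1-\theta)/2} S(t-r)\,(-A)^{\theta/2} f(r)\,{\rm d}r$, so that by \eqref{ana} the operator norm of $(t-r)^{\alpha-1}(-A)^{(\theta_1-\theta)/2}S(t-r)$ is bounded by $C(t-r)^{\alpha-1-(\theta_1-\theta)/2}$ when $\theta_1\ge\theta$ (and by $C(t-r)^{\alpha-1}$ when $\theta_1\le\theta$, since then $(\theta-\theta_1)/2\le 0$ is a negative power, covered by the first line of \eqref{ana}). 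Then H\"older's inequality with exponents $p$ and $p'=p/(p-1)$ gives
\begin{align*}
\|(-A)^{\theta_1/2} R_\alpha f(t)\|
\le C\Big(\int_0^t (t-r)^{(\alpha-1-(\theta_1-\theta)/2)p'}\,{\rm d}r\Big)^{1/p'}\|f\|_{\mathbb L^p(0,T;E^\theta)},
\end{align*}
and the time integral is finite precisely when $(\alpha-1-(\theta_1-\theta)/2)p'>-1$, i.e. $\alpha>(\theta_1-\theta)/2+1/p$; this gives boundedness $\mathbb L^p(0,T;E^\theta)\to \mathbb L^\infty(0,T;E^{\theta_1})$, which is the $\delta=0$ part in all three cases.

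For the H\"older seminorm in time, take $0\le t_1<t_2\le T$ and split $R_\alpha f(t_2)-R_\alpha f(t_1)$ into the usual two pieces: the ``near-diagonal'' part $\int_{t_1}^{t_2}(t_2-r)^{\alpha-1}S(t_2-r)f(r)\,{\rm d}r$, and the ``main'' part $\int_0^{t_1}\big[(t_2-r)^{\alpha-1}S(t_2-r)-(t_1-r)^{\alpha-1}S(t_1-r)\big]f(r)\,{\rm d}r$. After applying $(-A)^{\theta_1/2}$, the first piece is estimated exactly as above over the interval $[t_1,t_2]$ of length $t_2-t_1$, yielding a factor $(t_2-t_1)^{\alpha-1/p-(\theta_1-\theta)/2}$ (in case (3) with $\theta_1<\theta$ this reads $(t_2-t_1)^{\alpha-1/p+(\theta-\theta_1)/2}$, at least as good as $(t_2-t_1)^{\alpha-1/p}$). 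For the second piece I would write $(t_2-r)^{\alpha-1}S(t_2-r)-(t_1-r)^{\alpha-1}S(t_1-r)$ as a telescoping sum controlled by $\int_{t_1-r}^{t_2-r}\big|\tfrac{{\rm d}}{{\rm d}s}\big(s^{\alpha-1}S(s)\big)\big|\,{\rm d}s$, together with the identity $S(t_2-r)-S(t_1-r)=(S(t_2-t_1)-{\rm Id}_E)S(t_1-r)$ handled via the third line of \eqref{ana}; combined with \eqref{ana} this produces a kernel bound of order $(t_2-t_1)^{\rho}(t_1-r)^{\alpha-1-\rho-(\theta_1-\theta)/2}$ for a suitable $\rho\le 1$, and H\"older in $r$ over $[0,t_1]$ gives the claimed exponent $\delta$ once $\rho$ is optimized subject to the integrability constraint $(\alpha-1-\rho-(\theta_1-\theta)/2)p'>-1$. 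Matching the two pieces forces $\delta=\alpha-1/p+(\theta-\theta_1)/2$ in case (1) and $\delta=\alpha-1/p$ in case (3); in case (2), $\theta_1=\theta$, the constraint becomes $\rho<\alpha-1/p$, which is why one only gets $\delta<\alpha-1/p$ rather than equality.

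The main obstacle is the second piece: carefully bounding $(t_2-r)^{\alpha-1}S(t_2-r)-(t_1-r)^{\alpha-1}S(t_1-r)$ in $\LL(E^\theta,E^{\theta_1})$ uniformly in a way that extracts the \emph{sharp} power of $t_2-t_1$ while keeping the residual kernel in $r$ integrable against an $\mathbb L^p$ function. This requires splitting the difference into a ``difference of the scalar factors'' term $\big((t_2-r)^{\alpha-1}-(t_1-r)^{\alpha-1}\big)S(t_2-r)$ and a ``difference of the semigroups'' term $(t_1-r)^{\alpha-1}\big(S(t_2-r)-S(t_1-r)\big)$, estimating the first by the mean value theorem (which loses a power $(t_1-r)^{\alpha-2}$, borderline for integrability and dictating the constraint on $\alpha$) and the second by interpolating the third estimate in \eqref{ana} with the second one to trade a factor $(t_2-t_1)^\rho$ against $(t_1-r)^{-\rho}$. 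Since $E^{\theta_1}$ and $E^\theta$ are again M-type $2$ Banach spaces and all estimates used are purely those of an analytic $C_0$-semigroup, the argument is the verbatim transcription of the Hilbert-space proof in \cite{HL17}; no probabilistic input is needed here, so the only real work is the bookkeeping of exponents in the three cases.
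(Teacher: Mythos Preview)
Your proposal is correct and is exactly what the paper does: the paper's entire proof is the one-line remark that the lemma ``is a straightforward consequence of \cite[Proposition 4.1]{HL17}'', and your sketch simply unpacks that reference---the analytic semigroup estimates \eqref{ana} plus H\"older in time---noting, as you do in your last paragraph, that the argument is purely deterministic and carries over verbatim from the Hilbert setting of \cite{HL17} to $E^\theta$. So you have recovered precisely the paper's approach (only in more detail than the paper itself supplies).
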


\begin{proof}
It is a straightforward consequence of \cite[Proposition 4.1]{HL17} and we omit the details.
\end{proof}

By stochastic Fubini theorem, the following factorization formula is valid for the stochastic convolution $S\diamond G(X)$: 
\begin{align*}
S\diamond G(X)(t)
=\frac{\sin(\pi \alpha)}{\pi}R_\alpha G_\alpha(t),
\end{align*}
where $G_\alpha(t):=\int_0^t (t-r)^{-\alpha} S(t-r) G(X(r)){\rm d}W_U(r)$, $t\in [0,T]$.

\begin{tm}  \label{main2}
In addition to the conditions of Theorem \ref{main1}, let Assumption \ref{a2} hold.
Then the following statements hold.
	\begin{enumerate}
		\item
		When $\theta=0$, 
\begin{align}\label{con-0}
X \in~ 
& \mathbb L^p(\Omega;\CC^\delta([0,T]; E)) 
\cup \mathbb L^p(\Omega;\CC^{\alpha-\frac1p-\frac{\theta_1} 2}([0,T]; E^{\theta_1})) \nonumber \\
& \cap \mathbb L^p(\Omega;\CC^{\frac{\beta-\theta_2}2\wedge 1}([0,T]; E^{\theta_2}))
\end{align} 
for any $\delta\in [0,\alpha-1/p)$, $\theta_1\in (0,2\alpha-2/p)$ and $\theta_2\in [0,\beta]$.

		\item
		When $\theta>0$, 
\begin{align}\label{con-gamma}
X \in~
&\mathbb L^p(\Omega;\CC^\delta ([0,T]; E^\theta)) 
\cup \mathbb L^p(\Omega;\CC^{\alpha-\frac1p} ([0,T]; E^{\theta_1})) \nonumber \\
&\cup \mathbb L^p(\Omega;\CC^{\alpha-\frac1p+\frac{\theta-\theta_2}2} ([0,T]; E^{\theta_2}))
\cap \mathbb L^p(\Omega;\CC^{\frac{\beta-\theta_3}2\wedge 1}([0,T]; E^{\theta_3}))
\end{align} 
for any $\delta\in [0,\alpha-1/p)$, $\theta_1\in (0,\theta)$, 
$\theta_2\in (\theta, \theta+2\alpha-2/p)$ and
$\theta_3\in [0,\beta]$.
\end{enumerate}
\end{tm}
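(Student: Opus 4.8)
The plan is to decompose the mild solution \eqref{mild} into its three constituents $S(\cdot)X_0$, $S*F(X)(\cdot)$, and $S\diamond G(X)(\cdot)$, and establish the claimed trajectory regularity for each separately; the assertion then follows by the triangle inequality for $\CC^\delta$-norms. For the initial-data term, the analyticity estimates \eqref{ana} give $\|(S(t_2-t_1)-\mathrm{Id}_E)S(t_1)X_0\|_{\theta_2} \le C|t_2-t_1|^{(\beta-\theta_2)/2\wedge 1}\|X_0\|_\beta$ whenever $\theta_2 \le \beta$, using $(-A)^{\theta_2/2}S(t_1) = (-A)^{-(\beta-\theta_2)/2}S(t_1)\cdot(-A)^{\beta/2}$ combined with the third line of \eqref{ana}; taking $\mathbb L^p(\Omega)$-norms and recalling $X_0 \in \mathbb L^p(\Omega;E^\beta)$ yields $S(\cdot)X_0 \in \mathbb L^p(\Omega;\CC^{(\beta-\theta_3)/2\wedge1}([0,T];E^{\theta_3}))$. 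This already produces the last term appearing under the intersection in both \eqref{con-0} and \eqref{con-gamma}.

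The heart of the argument is the treatment of the stochastic convolution via the factorization formula $S\diamond G(X) = \frac{\sin(\pi\alpha)}{\pi}R_\alpha G_\alpha$ recorded just before the statement. First I would show $G_\alpha \in \mathbb L^p(\Omega;\mathbb L^p(0,T;E^\theta))$: applying the Burkholder inequality \eqref{bdg} to $G_\alpha(t) = \int_0^t (t-r)^{-\alpha}S(t-r)G(X(r))\,\mathrm dW_U(r)$ gives
\begin{align*}
\ee\big[\|G_\alpha(t)\|_\theta^p\big] \le C\bigg(\int_0^t (t-r)^{-2\alpha}\|S(t-r)G(X(r))\|_{\mathbb L^p(\Omega;\gamma(U,E^\theta))}^2\,\mathrm dr\bigg)^{p/2},
\end{align*}
and bounding the integrand by $K_{G,\theta}^2(t-r)(1+\|X(r)\|_{\mathbb L^p(\Omega;E^\theta)})^2$ using Assumption \ref{a1}, then invoking the moment bound \eqref{mom} and the finiteness of $K_G^{\theta,\alpha}$ from Assumption \ref{a2}, shows $\ee[\,\|G_\alpha\|_{\mathbb L^p(0,T;E^\theta)}^p\,]<\infty$. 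Then Lemma \ref{prop-hol}, applied in its three cases, transfers this $\mathbb L^p(0,T;E^\theta)$-bound on $G_\alpha$ into $\CC^\delta([0,T];E^{\theta_i})$-regularity for $R_\alpha G_\alpha$: case (ii) with $\theta_1=\theta$ gives the $\CC^\delta([0,T];E^\theta)$ membership for $\delta<\alpha-1/p$, case (iii) gives $\CC^{\alpha-1/p}([0,T];E^{\theta_1})$ for $\theta_1<\theta$, and case (i) gives $\CC^{\alpha-1/p+(\theta-\theta_2)/2}([0,T];E^{\theta_2})$ for $\theta<\theta_2<\theta+2\alpha-2/p$ (the constraint $\alpha>(\theta_2-\theta)/2+1/p$ is exactly the upper bound on $\theta_2$). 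The case $\theta=0$ in \eqref{con-0} is the specialization where case (iii) is vacuous and cases (i)–(ii) collapse to the stated exponents.

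For the deterministic convolution $S*F(X)$ one argues analogously but more simply, since no stochastic integral is involved: writing $S*F(X)(t) = \frac{\sin(\pi\alpha)}{\pi}R_\alpha F_\alpha(t)$ with $F_\alpha(t):=\int_0^t(t-r)^{-\alpha}S(t-r)F(X(r))\,\mathrm dr$ (or handling it directly), one uses the bound $\|S(t)F(z)\|_\theta \le K_{F,\theta}(t)(1+\|z\|_\theta)$, the finiteness of $K_F^{\theta,\alpha}$, and \eqref{mom} to place $F_\alpha \in \mathbb L^p(\Omega;\mathbb L^p(0,T;E^\theta))$, then applies Lemma \ref{prop-hol} again; in fact the drift term is more regular than the noise term, so it does not constrain the final exponents. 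The main obstacle I anticipate is purely bookkeeping: verifying that the hypothesis $\alpha\in(1/p,1/2)$ together with $\beta\ge\theta\ge0$ makes each of the three cases of Lemma \ref{prop-hol} applicable with the precise exponent ranges claimed, and checking that the almost-sure Hölder continuity (with an $\mathbb L^p$ Hölder seminorm) rather than merely $\mathbb L^p(\Omega)$-continuity really follows — this is where one must be careful that $R_\alpha$ maps into the pathwise space $\CC^\delta$, which Lemma \ref{prop-hol} indeed asserts, so the pathwise statement is automatic once $G_\alpha, F_\alpha$ have almost surely $\mathbb L^p(0,T;E^\theta)$-paths.
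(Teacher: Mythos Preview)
Your proposal is correct and follows essentially the same route as the paper's own proof: decompose the mild solution into $S(\cdot)X_0$, $S*F(X)$, $S\diamond G(X)$; handle the initial-data term via \eqref{ana}; handle the stochastic convolution by the factorization formula, showing $G_\alpha\in \mathbb L^p(\Omega;\mathbb L^p(0,T;E^\theta))$ from Burkholder and Assumption~\ref{a2}, then invoking Lemma~\ref{prop-hol} in its three cases; and treat $S*F(X)$ analogously. Your case-by-case matching of Lemma~\ref{prop-hol} to the exponents in \eqref{con-0}--\eqref{con-gamma} is exactly what the paper leaves implicit, and your remark that the pathwise $\CC^\delta$ conclusion comes for free once $G_\alpha$ has a.s.\ $\mathbb L^p(0,T;E^\theta)$ paths is the right way to read Lemma~\ref{prop-hol}.
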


\begin{proof}
For the initial datum, by \eqref{ana} we get
	\begin{align*}
	\|S(t_2)X_0-S(t_1)X_0\|_{\theta_1} 
	&=\|(-A)^\frac{\theta_1-\beta}2 (S(t_2-t_1)-{\rm Id}_{E}) 
	(-A)^\frac\beta2 S(t_1) X_0\| \nonumber \\
	&\le C|t_2-t_1|^{\frac{\beta-\theta_1}2\wedge 1} \|X_0\|_\beta
	\end{align*}
	for any $\theta_1\in [0,\beta)$. 
	Combining with the strong continuity of $S(\cdot)$ shows that 
	$S(\cdot) X_0\in \CC^{\frac{\beta-\theta_1}2\wedge 1}([0,T]; E^{\theta_1})$ for any $\theta_1\in [0,\beta]$. 
	
By Fubini theorem and the usual Burkholder inequality, we get
\begin{align*}
& \ee \bigg[\|G_\alpha(t)\|_{\mathbb L^p(0,T;E^\theta)}^p  \bigg]  
=\int_0^T \ee\bigg[\|G_\alpha(t) \|_{\theta}^p \bigg]  {\rm d}t  \\
&\le \bigg[\int_0^T \bigg(\int_0^t r^{-2\alpha} K_{G,\theta}^2(r) {\rm d}r  \bigg)^\frac p2 {\rm d}t \bigg] \bigg(1+\|X\|_{\HHH^p_\theta}\bigg)^p
<\infty,
\end{align*}
which implies that $G_\alpha\in \mathbb L^p(0,T; E^{\theta})$ almost surely. 
Applying Lemma  \ref{prop-hol} shows that if $\theta=0$, then
\begin{align*}
S\diamond G(X)  \in~ 
& \mathbb L^p(\Omega;\CC^\delta([0,T]; E) 
\cup \mathbb L^p(\Omega;\CC^{\alpha-\frac1p-\frac{\theta_1} 2}([0,T]; E^{\theta_1})) 
\end{align*} 
for any $\delta\in [0,\alpha-\frac1p)$ and $\theta_1\in (0,2\alpha-\frac2p)$; and if $\theta>0$, then
\begin{align*}
S\diamond G(X)  \in~
&\mathbb L^p(\Omega;\CC^\delta ([0,T]; E^\theta) 
\cup \mathbb L^p(\Omega;\CC^{\alpha-\frac1p} ([0,T]; E^{\theta_1}))  \\
&\cup \mathbb L^p(\Omega;\CC^{\alpha-\frac1p+\frac{\theta-\theta_1}2} ([0,T]; E^{\theta_2}))
\end{align*} 
for any $\delta\in [0,\alpha-1/p)$, $\theta_1\in (0,\theta)$, 
and $\theta_2\in (\theta, \theta+2\alpha-2/p)$.
Similar argument yields the same regularity for $S*F(X)$.
Thus we conclude \eqref{con-0} and \eqref{con-gamma} by combining the H\"older continuity of $S(\cdot)X_0$, $S*F(X)$ and $S\diamond G(X)$.
\end{proof}

\begin{rk}
Let $\beta=\theta=0$ and Assumptions \ref{a1} and \ref{a2} hold, then by \cite[Proposition 5.9]{DZ14}, $X\in \mathbb L^p(\Omega;\CC([0,T]; \hh))$ if $S(\cdot)$ is only a $C_0$-semigroup $S(\cdot)$.
Moreover, similarly to \cite[Theorem 2.2]{HL17}, the following stronger moments' estimation holds for some constant $C=C(T,p,\alpha,K_F^\alpha,$ $K_G^\alpha)$:
	\begin{align*}
	\ee\Big[\sup_{t\in [0,T]}\|X(t)\|^p\Big]
	&\le C\Big(1+\ee\Big[\|X_0\|^p \Big]\Big).
	\end{align*}
\end{rk}

\section{Example}
\label{sec5}

In this section, we illustrate our results by the stochastic heat equation
\begin{align}\label{she}\tag{SHE}
\begin{split}
&{\rm d} X(t,\xi)
=(\Delta X(t,\xi)+f(X(t,\xi))) {\rm d}t +g(X(t,\xi)) {\rm d}W(t,\xi),\\
&X(t,\xi)=0, \quad (t,\xi)\in [0,T]\times \partial \OOO,\\
&X(0,\xi)=X_0(\xi), \quad \xi\in \OOO,
\end{split}
\end{align}
and give a positive answer to the open problem given in \cite{JR12(JDE)}.
Without loss of generality, we assume that $X_0$ is deterministic function.

Set $U=\mathbb L^2(\OOO)$, $E=\mathbb L^q(\OOO)$ with $q\ge 2$ throughout this section.
Define $A=\Delta$ with domain $\text{Dom}(A)=W_0^{1,q}(\OOO)\cap W^{2,q}(\OOO)$.
Here $W^{\bs,q}(\OOO)$ with $\bs\in \nn_+$ denotes the Sobolev space consisting of functions in $\OOO$ whose derivatives up to order 
$\bs$ are all in $\mathbb L^q(\OOO)$, and $W_0^{1,q}(\OOO):=\{f\in W^{1,q}(\OOO):\ f|_{\partial \OOO}=0\}$.
In the following, we also need the Sobolev--Slobodeckij space $W^{\theta,q}$ with $\theta\in (0,1)$, whose norm is defined by
\begin{align*}
\|X\|_{W^{\theta,q}(\OOO)}
:=\bigg(\|X\|_{\mathbb L^q(\OOO)}^q 
+\int_{\OOO}\int_{\OOO} 
\frac{|X(\xi)-X(\eta)|^q}{|\xi-\eta|^{{\rm d}+\theta q}}
{\rm d}\xi {\rm d}\eta \bigg)^\frac1q.
\end{align*}
Similarly, we denote $W_0^{\theta,q}(\OOO):=\{f\in W^{\theta,q}(\OOO):\ f|_{\partial \OOO}=0\}$.

Let ${\bf Q}\in \LL(\mathbb L^2(\OOO))$ and $\{h_n\}_{n\in \nn_+}\subset \mathbb L^2(\OOO)$ be eigenfunctions of ${\bf Q}$ which forms an orthonormal basis of $\mathbb L^2(\OOO)$ with related eigenvalues $\{\lambda_n\}_{n\in \nn_+}$. 
Assume that $W=\{W(t):\ t\in [0,T]\}$ is an $\mathbb L^2(\OOO)$-valued ${\bf Q}$-Wiener process, i.e.,
\begin{align*}
W(t)=\sum_{n\in \nn_+}\sqrt{\lambda_n} h_n \beta_n(t),
\quad t\in [0,T],
\end{align*} 
where $(\beta_n)_{n\ge 1}$ is a sequence of independent standard Brownian motions with respect to the filtration 
$(\FFF_t)_{t\in [0,T]}$.
Our main assumption on the system $\{(\lambda_n,h_n)\}_{n\in \nn_+}$ is that there exists a constant $\epsilon \in [0,1]$ such that 
\begin{align}\label{con-q}
\sum_{n\in \nn_+} \sqrt \lambda_n \|h_n\|_{\CC^\epsilon(\OOO)}
=: C_{\bf Q}<\infty,
\end{align}
where $\|\cdot\|_{\CC^0(\OOO)}$ denotes the $\mathbb L^\infty(\OOO)$-norm and $\|\cdot\|_{\CC^\epsilon(\OOO)}$ with $\epsilon\in (0,1]$ denotes the H\"older norm
\begin{align*}
\|h\|_{\CC^\epsilon(\OOO)}:=\|h\|_{\mathbb L^\infty(\OOO)}+\sup_{\xi,\eta\in\OOO,\xi\neq \eta} 
\frac{|h(\xi)-h(\eta)|}{|\xi-\eta|^\epsilon},
\quad h\in \CC^\epsilon(\OOO).
\end{align*}

Assume that $f,g:\rr\rightarrow \rr$ are Lipschitz continuous functions with Lipschitz constant $L_f,L_g>0$, i.e., for any $\xi_1$, $\xi_2\in \rr$,
\begin{align*}
|f(\xi_1)-f(\xi_2)|\le L_f |\xi_1-\xi_2|,
\quad |g(\xi_1)-g(\xi_2)|\le L_g |\xi_1-\xi_2|.
\end{align*}
Define the operators $F:E\rightarrow E$ and $G:E\rightarrow \LL(U,E)$ by the Nymiskii operator associated with $f$ and $g$, respectively: 
\begin{align*}
F(u)(\xi):=f(u(\xi)),\quad
G(u)h_n(\xi):=\sqrt \lambda_n g(u(\xi)) h_n(\xi),
\quad \xi\in \OOO,\ n\in \nn_+.
\end{align*}
Then the stochastic heat equation \eqref{she} is equivalent to Eq. \eqref{spde}. 

Let $\theta\in (0,1)$ and fix $x,y\in \mathbb L^q (\OOO)$ and $z\in E^\theta(\OOO)$.
For the drift term, by the uniform boundedness of the semigroup $S(\cdot)$ and the Lipschitz continuity of $f$, we get the conditions on $F$ of Assumption \ref{a1}:
\begin{align*}
\|& S(t) (F(x)-F(y) \|_{\mathbb L^q(\OOO)}
\le C\|x-y\|_{\mathbb L^q(\OOO)}.
\end{align*}
On the other hand,
\begin{align*}
 \|S(t) F(z) \|_{E^\theta(\OOO)}
&\le C t^{-\frac\theta2} (1+\|z\|_{\mathbb L^q(\OOO)})
\le Ct^{-\frac\theta2}  (1+\|z\|_{E^\theta(\OOO)}).
\end{align*}
These two inequalities show the conditions on $F$ of Assumptions \ref{a1} and \ref{a2} with $K_F=C$ and 
$K_{F,\theta}=C t^{-\frac\theta2}$
for any $\alpha\in (0,1-\theta/2)$, where $C$ depends on $M(T)$, $L_f$, $f(0)$ and the volume of $\OOO$.

In view of the diffusion term, the uniform boundedness of the semigroup $S(\cdot)$, the definitions of the $\gamma$-radonifying operator and the Lipschitz continuity of $g$ and \eqref{con-q} lead to
\begin{align*}
& \|S(t) (G(x)-G(y))\|_{\gamma(\mathbb L^2(\OOO),\mathbb L^q(\OOO))} \\
& \le \sum_{n\in \nn_+} \|S(t) (G(x)-G(y)) h_n\|_{\mathbb L^q(\OOO)} \\
& \le C \Big(\sum_{n\in \nn_+} \sqrt \lambda_n \|h_n\|_{\mathbb L^\infty(\OOO)} \Big) \big\|x-y\big\|_{\mathbb L^q(\OOO)}  \\
&\le C \big\|x-y\big\|_{\mathbb L^q(\OOO)}.
\end{align*}

To verify the growth condition of $G$ in $E^\theta$ for 
$\theta>0$, we need the fact (see, e.g., \cite{Lun09}) that 
\begin{align}\label{emb}
W_0^{\theta_1,q}
\hookrightarrow E_q^{\theta_2}  
\hookrightarrow  W_0^{\theta_3,q}
\quad \text{for all}\quad 1/q<\theta_3<\theta_2<\theta_1<1.
\end{align}
Assume that $g(0)=0$ or $h_n|_{\partial \OOO}=0$.
Now let $\sigma\in (0,\theta/2)$ be sufficiently small, then \eqref{ana} yield that  
\begin{align*}
\|S(t) G(z)\|_{\gamma(\mathbb L^2(\OOO),E^\theta(\OOO))}
& \le \sum_{n\in \nn_+} \|S(t) G(z)h_n \|_{E^\theta(\OOO)} \\
&\le C t^{-\sigma} \sum_{n\in \nn_+} \|G(z)h_n \|_{E_q^{\theta-2\sigma}(\OOO)} \\
&\le C t^{-\sigma} \sum_{n\in \nn_+} \|G(z)h_n \|_{W^{\theta-\sigma,q}(\OOO)}.
\end{align*}

By triangle inequality and the boundedness of $\{h_n\}_{n\in \nn_+}$,
\begin{align*}
&\|G(x)h_n\|_{W^{\theta,q}(\OOO)}^q\\
&\le \lambda_n^\frac q2 
\bigg(\|h_n\|_{\mathbb L^\infty}^q \|g(x)\|_{\mathbb L^q(\OOO)}^q
+\int_{\OOO\times\OOO}\frac{|g(x(\xi))-g(x(\eta))|^q|h_n(\xi)|^q}{|\xi-\eta|^{{\rm d}+\theta q}}{\rm d}\xi{\rm d}\eta\\
&\qquad \qquad +\int_{\OOO\times\OOO}
\frac{|g(x(\eta))|^q |h_n(\xi)-h_n(\eta)|^q}{|\xi-\eta|^{{\rm d}+\theta q}}{\rm d}\xi{\rm d}\eta\bigg) \\
&\le C \lambda_n^\frac q2 \bigg( \|h_n\|_{\mathbb L^\infty}^q 
 \|g(x)\|_{W^{\theta,q}(\OOO)}^q
 +\bigg(\sup_{\xi,\eta\in\OOO,\xi\neq \eta} 
\frac{|h_n(\xi)-h_n(\eta)|}{|\xi-\eta|^\epsilon}\bigg)^q \\
&\qquad \qquad \times \int_{\OOO\times\OOO}|g(x(\xi))|^q|\xi-\eta|^{(\epsilon-\theta) q-{\rm d}}{\rm d}\xi{\rm d}\eta\bigg).
\end{align*}
For any $\theta<\epsilon$, we obtain by applying Fubini theorem 
\begin{align*}
& \int_{\OOO\times\OOO} |g(x(\xi))|^q|\xi-\eta|^{(\epsilon-\theta) q-{\rm d}}{\rm d}\xi{\rm d}\eta \\
&\le \bigg(\int_{\OOO-\OOO}|x|^{(\epsilon-\theta) q-{\rm d}}{\rm d}\xi\bigg) \bigg(\int_{\OOO}|g(x(\xi))|^q{\rm d}\xi\bigg)
\le C \|g(x)\|_{\mathbb L^q(\OOO)}^q,
\end{align*}
where $\OOO-\OOO$ denote the set $\{\zeta=\xi-\eta:\ \xi,\eta\in \OOO\}$.
Then we get
\begin{align*}
\sum_{n\in \nn_+} \|G(x)h_n\|_{W^{\theta,q}(\OOO)}
\le C (1+ \|x\|_{W^{\theta,q}(\OOO)}).
\end{align*}
As a result, when $\theta-\sigma<\epsilon$, we have
\begin{align*}
\|S(t) G(z)\|_{\gamma(\mathbb L^2(\OOO),E^\theta(\OOO))}
&\le C t^{-\sigma} (1+\|z\|_{W^{\theta-\sigma,q}(\OOO)}) \\
&\le C t^{-\sigma} (1+\|z\|_{E^\theta (\OOO)}).
\end{align*}
The above two inequalities yield the conditions on $G$ of Assumptions \ref{a1} and \ref{a2} with $K_G=C$ and 
$K_{G,\theta}=C t^{-\sigma}$
for any $\alpha\in (0,1/2-\sigma)$, where $C$ depends on $M(T)$, $L_f$, $f(0)$, the volume of $\OOO$ and the embedding constants in \eqref{emb}.

Thus we have shown Assumptions \ref{a1} and \ref{a2} for any 
$\theta\in [0, \epsilon]\cap [0,1/2)$ and $\alpha\in [0,1/2)$. 
Applying Theorems \ref{main1} and \ref{main2} and appropriate Sobolev embedding, we deduce the following regularity for the solution of the stochastic heat equation \eqref{she}, which gives an answer to the open question in \cite[Section 4]{JR12(JDE)}.

\begin{tm} \label{main3}
Assume that $f,g:\rr\rightarrow \rr$ are Lipschitz continuous functions, and $W$ is ${\bf Q}$-Wiener process such that the eigensystem 
$\{(\lambda_n,h_n)\}$ of ${\bf Q}$ satisfies \eqref{con-q} for some $\epsilon\in [0,1]$.
\begin{enumerate}
\item
When $\epsilon=0$, assume that $X_0\in E_q^1(\OOO)$ for any $q\ge 2$, then Eq. \eqref{she} possesses a unique mild solution $X$ such that
\begin{align}\label{she-hol0}
X \in~ \CC^\delta([0,T]; \CC^\theta(\OOO))
\quad {\rm a.s.} 
\end{align}
for any $\delta,\theta\ge 0$ such that $\delta+\theta/2<1/2$.

\item
When $\epsilon\in (0,1]$, assume that $X_0\in E_q^{3/2}(\OOO)$ for any $q\ge 2$ and that $g(0)=0$ or $h_n=0$ on $\partial \OOO$.
Then Eq. \eqref{she} possesses a unique mild solution $X$ such that
\begin{align}\label{she-hol}
X\in~ & \CC^\delta([0,T]; \CC^\kappa(\OOO)) \cup  \CC^{\delta_1}([0,T]; \CC^{\kappa_1}(\OOO))
\quad {\rm a.s.} 
\end{align} 
for any $\delta\in [0,1/2)$, $\kappa\in [0,\epsilon \wedge 1/2)$ and any $\delta_1\in [0,(1+\epsilon-\kappa_1)/2\wedge (3-2\kappa_1)/4)$, 
$\kappa_1\in [\epsilon \wedge 1/2,(1+\epsilon)\wedge 3/2)$.
\end{enumerate}
\end{tm}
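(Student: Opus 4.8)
\textbf{Proof proposal for Theorem \ref{main3}.}

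The plan is to identify the concrete values of the abstract parameters appearing in Theorems \ref{main1} and \ref{main2} and then translate the resulting $E^\theta_q$-regularity into classical H\"older regularity via Sobolev embedding. First I would fix the smoothness exponent $\theta$ of the noise: the analysis in the body of Section \ref{sec5} shows that Assumptions \ref{a1} and \ref{a2} hold with $K_{F,\theta}(t)=Ct^{-\theta/2}$ and $K_{G,\theta}(t)=Ct^{-\sigma}$ for every $\theta\in[0,\epsilon]\cap[0,1/2)$ and every admissible $\alpha<1/2$ (and, in the case $\theta>0$, for $\sigma\in(0,\theta/2)$ arbitrarily small, which makes the constraint $\alpha<1/2-\sigma$ harmless). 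In the case $\epsilon=0$ one simply takes $\theta=0$; in the case $\epsilon\in(0,1]$ one takes $\theta$ up to $\epsilon\wedge1/2$, and the Banach base space is $E=\mathbb L^q(\OOO)$ with $q\ge 2$ still free. The regularity parameter $\beta$ of the initial datum is dictated by the hypotheses: $\beta=1$ when $\epsilon=0$ and $\beta=3/2$ when $\epsilon\in(0,1]$ (one has to check $E^{3/2}_q\subseteq E^\beta_q$ with $\beta$ as needed and that $X_0\in\mathbb L^p(\Omega;E^\beta_q)$ trivially since $X_0$ is deterministic).

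Next I would invoke Theorem \ref{main2}. Since $p>2$ is free and $\alpha<1/2$ is free, the exponent $\alpha-1/p$ can be taken arbitrarily close to $1/2$; likewise $2\alpha-2/p$ can be taken arbitrarily close to $1$. Feeding this into the abstract conclusions \eqref{con-0} and \eqref{con-gamma} yields, for the $\epsilon=0$ case, that $X\in\mathbb L^p(\Omega;\CC^\delta([0,T];E^{\theta_2}_q))$ for all $\delta<1/2$ and all $\theta_2\in[0,1]$ with $\delta$ and $\theta_2$ coupled by $\delta+\theta_2/2<1/2$ (the binding constraint is the deterministic-convolution/initial-datum term $\CC^{(\beta-\theta_2)/2\wedge1}$ with $\beta=1$, together with the stochastic term $\CC^{\alpha-1/p-\theta_2/2}$); and for the $\epsilon\in(0,1]$ case, that $X\in\mathbb L^p(\Omega;\CC^{\delta}([0,T];E^{\kappa'}_q))$ on the appropriate ranges — from the $\theta_1<\theta$ and $\theta_2\in(\theta,\theta+2\alpha-2/p)$ branches one gets spatial order up to $\theta+1\approx\epsilon+1$ (capped at $3/2$ by the $\beta=3/2$ datum term), Hölder exponent in time up to $1/2$ at low spatial order and decaying like $(1+\epsilon-\kappa_1)/2$ and $(\beta-\kappa_1)/2=(3/2-\kappa_1)/2$ at high spatial order, i.e. $(3-2\kappa_1)/4$.

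Then comes the conversion to classical spaces. Using the Sobolev embedding $E^{s}_q=[\mathbb L^q,\,\mathrm{Dom}(A)]\hookrightarrow W^{s,q}_0(\OOO)$ (for non-integer $s$, as in \eqref{emb}) together with the Sobolev embedding $W^{s,q}(\OOO)\hookrightarrow\CC^{s-\mathrm d/q}(\OOO)$ valid when $s-\mathrm d/q>0$, and letting $q\to\infty$ (which is permitted since all constants above are uniform in $q\ge2$, as the functions $f,g$ and the bound \eqref{con-q} do not involve $q$), one converts $E^{\theta_2}_q$-regularity of order $\theta_2$ into $\CC^{\theta_2-\varepsilon}(\OOO)$-regularity for arbitrarily small $\varepsilon$; renaming exponents gives \eqref{she-hol0} and \eqref{she-hol}. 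Finally, to upgrade "$X\in\mathbb L^p(\Omega;\CC^\delta(\dots))$" to the almost-sure Hölder statements as displayed, one uses that membership in $\mathbb L^p(\Omega;\CC^\delta)$ in particular forces the trajectories to lie in $\CC^\delta$ almost surely, which is exactly the definition adopted at the start of Section \ref{sec4}.

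I expect the main obstacle to be bookkeeping rather than any deep difficulty: one must check that \emph{all} the estimates in Section \ref{sec5} (in particular the constants $C$ arising from the Nemytskii bounds, the embedding \eqref{emb}, and the kernel estimate $\int_{\OOO-\OOO}|\zeta|^{(\epsilon-\theta)q-\mathrm d}\,\mathrm d\zeta<\infty$) are genuinely uniform in $q$ as $q\to\infty$, so that the limit $q\to\infty$ in the Sobolev embedding $W^{s,q}\hookrightarrow\CC^{s-\mathrm d/q}$ is legitimate; the borderline case $\theta=\epsilon$ (where $\theta-\sigma<\epsilon$ is needed) and the endpoint $\kappa_1=\epsilon\wedge1/2$ also require a small amount of care with strict versus non-strict inequalities. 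The second, more delicate point is matching the two competing time-Hölder exponents in the high-regularity regime of case (2): one has to verify that $\min\{(1+\epsilon-\kappa_1)/2,\ (3-2\kappa_1)/4\}$ is exactly what Theorem \ref{main2} delivers after optimizing over $\alpha\uparrow1/2$, $p\to\infty$, and the choice of which abstract branch ($\theta_2$ versus $\theta_3$) governs a given $\kappa_1$.
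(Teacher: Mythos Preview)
Your proposal is correct and follows essentially the same route as the paper's proof: verify Assumptions \ref{a1}--\ref{a2} for the concrete data of \eqref{she}, apply Theorems \ref{main1} and \ref{main2} with $\alpha\uparrow 1/2$ and $p$ large, and then convert $E^\theta_q$-regularity into $\CC^\kappa(\OOO)$-regularity via Sobolev embedding by choosing $q$ sufficiently large. One minor clarification: you do not need the constants to be uniform in $q$ nor to take an actual limit $q\to\infty$; for each target H\"older exponent $\kappa$ you simply fix a single large $q$ with $\theta-\mathrm d/q>\kappa$ and apply the abstract result in that fixed space, exactly as the paper does.
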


\begin{proof}
When $\epsilon=0$, by Theorems \ref{main1} and \ref{main2}, Eq. \eqref{she} possesses a unique mild solution such that $X \in~ \CC^\delta([0,T]; E^\theta)$ a.s. for any $\delta,\theta\ge 0$ such that $\delta+\theta/2<1/2$.
The first H\"older continuity \eqref{she-hol0} then follows from the Sobolev embedding.

When $\epsilon\in (0,1]$, we only prove the case $\epsilon\in [1/2,1]$, since similar arguments yield the case $\epsilon\in (0,1/2)$.
Theorems \ref{main1} and \ref{main2} imply that Eq. \eqref{she} possesses a unique mild solution such that 
\begin{align*}
X  \in \CC^\delta ([0,T]; E^\theta(\OOO)) \cup \CC^{\delta_1} ([0,T]; E_q^{\theta_1}(\OOO))\quad {\rm a.s.} 
\end{align*} 
for any $\delta\in [0,1/2)$, $\theta\in [0,1/2)$, 
$\theta_1\in [1/2, 3/2)$ and $\delta_1<3/4-\theta_1/2$.
Then using the Sobolev embedding, we get
\begin{align*}
X\in~ & \CC^\delta([0,T]; \CC^\kappa(\OOO)) \cup 
\CC^{\delta_1}([0,T]; \CC^{\kappa_1}(\OOO))
\quad {\rm a.s.} 
\end{align*} 
for any $\delta\in [0,1/2)$, $\kappa\in [0,\theta-{\rm d}/q)$, 
$\delta_1\in [0,3/4-\theta_1/2)$ and 
$\kappa_1\in (0,\theta_1-{\rm d}/q)$ when ${\rm d}<q/2$.
Taking sufficiently large $q$, we conclude \eqref{she-hol}.
\end{proof}

\begin{rk}
The H\"older continuity \eqref{she-hol} shows that the solution of the stochastic heat equation \eqref{she} enjoys values in the space $\CC([0,T];\CC^\kappa(\OOO))$ of continuous differentiable functions on $[0,T]\times \OOO$ with $(\kappa-1)$-H\"older continuous spatial derivatives for any $\kappa\in (1,(1+\epsilon)\wedge 3/2)$.
\end{rk}

\bibliographystyle{amsalpha}
\bibliography{bib}

\end{document}